\begin{document}
\theoremstyle{remark}
\newtheorem{theorem}{\textbf{Theorem}}
\newtheorem{lemma}[theorem]{\textbf{Lemma}}
\newtheorem{corollary}[theorem]{\textbf{Corollary}}
\newtheorem{proposition}[theorem]{\textbf{Proposition}}
\newtheorem{example}{\textbf{Example}}
\newtheorem{claim}{\textbf{Claim}}
\newtheorem{algorithm}{\textbf{Algorithm}}
\newtheorem{definition}{\textbf{Definition}}
\newtheorem{case}{\textbf{Case}}
\newtheorem*{beweis}{\textbf{Proof:}}
\newtheorem{observation}{\textbf{Observation}}

\begin{center}
\large{\textsc{The inducibility of small oriented graphs}}
\end{center}
\begin{center}
\large{\textsc{Konrad Sperfeld}}\linebreak
Universität Rostock, Institut für Mathematik\linebreak
D-18057 Rostock, Germany \linebreak
\textit{Konrad.Sperfeld@uni-rostock.de}
\end{center}
\begin{abstract}
Abstract: We use Razborov's flag algebra method \cite{flagalgebra} to show an asymptotic upper bound for the maximal induced density $i(\vec P_3)$ of the orgraph $\vec P_3$ in an arbitrary orgraph. A conjecture of Thomassé states that $i(\vec P_3)=\frac{2}{5}$. The hitherto best known upper bound $i(\vec P_3)\leq\frac{12}{25}$ was given by Bondy. We can show that $i(\vec P_3)\leq 0.4446$. Further, we consider such a maximal density for some other small orgraphs. With easy arguments one can see that $i(\vec C_3)=\frac{1}{4}$, $i(\vec K_2 \cup \vec E_1)=\frac{3}{4}$ and $\frac{2}{21}\leq i(\vec C_4)$. We show that $i(\vec C_4)\leq 0.1104$ and conjecture that the extremal orgraphs of $\vec P_3$ and $\vec C_4$ are the same. Furthermore we show that $6-4\sqrt{2}\leq i(\vec K_{1,2})\leq 0.4644$.
\end{abstract}
\section{Introduction}
The whole paper deals with oriented graphs (we will call them orgraphs), thus graphs with directed edges, no loops, no bidirected edges and no multiple edges. For an orgraph $\Gamma=(V,E)$ we write $V_\Gamma$ for its set of vertices and $\left|\Gamma\right|$ for its number of vertices as well as $E_\Gamma$ for its set edges and $\left\|\Gamma\right\|$ for its number of edges. The most famous and surely most studied problem on orgraphs is the Cacetta-Häggvist conjecture \cite{CH}, which was made in 1978. In this paper we want to turn our attention to another problem in this area. For an orgraph $\Gamma$ let $max(\Gamma;n)$ denote the maximal number of sets $T\subseteq V_\Gamma$ with $\left|T\right|=\left|\Gamma\right|$ such that $T$ induces a subgraph which is isomorphic to $\Gamma$ in an $n$-vertex orgraph. Remark, that we don't count possible symmetries of $\Gamma$. Now the inducibility of an orgraph $\Gamma$ is defined as
\begin{equation*}
i(\Gamma):=\limsup\limits_{n\rightarrow\infty}\frac{max(\Gamma;n)}{\binom{n}{\left|\Gamma\right|}},
\end{equation*}
the asymptotic value of the maximal density of $\Gamma$ in any orgraph. Thus, in an arbitrary orgraph the maximal density of orgraphs $\Gamma$ is $i(\Gamma)+o(1)$.\newline
There are several papers (see \cite{bollobas}, \cite{brown}, \cite{exoo}, \cite{hatami} and \cite{hirst}), where the inducibility on undirected simple graphs is investigated. There are some small orgraphs, whose inducibility is not known yet. On these we want to focus now.\newline
For two orgraphs $\Gamma_1=(V_1,E_1)$ and $\Gamma_2=(V_2,E_2)$ the lexicographic graph product $\circ$ is defined as the following orgraph.
\begin{equation*}
\Gamma_1\circ \Gamma_2:= \left(V_1\times V_{2},\left\{(d_1,d_2)(d_1',d_2') \, | \; d_1d_1'\in E_{1} \text{ or } \left(d_1=d_1' \text{ and } d_2d_2'\in E_{2}\right)\right\}\right)
\end{equation*}
Furthermore, we define
\begin{equation*}
\Gamma^{\circ n}:=\underbrace{\Gamma\circ \Gamma \circ \ldots \circ \Gamma}_{n\text{-times}}.
\end{equation*}
Thus, the lexicographic product $\Gamma_1\circ \Gamma_2$ is a copy of the orgraph $\Gamma_1$, where each vertex of $\Gamma_1$ is replaced by a copy of $\Gamma_2$. 
\newline
Figure \ref{fig:graph} defines some orgraphs, which we will need in the following.\newline
\begin{figure}[ht]
\begin{center}
\begin{tikzpicture}
\filldraw[fill=black] (1,1) circle (0.07cm);
\filldraw[fill=black] (1.5,2) circle (0.07cm);
\filldraw[fill=black] (0.5,2) circle (0.07cm);
\draw[-to,shorten >=2pt,thick] (0.5,2)--(1,1);
\draw[-to,shorten >=2pt,thick] (1,1)--(1.5,2);
\draw  (1,0) node {$\vec P_3$};
\end{tikzpicture}
$\qquad$
\begin{tikzpicture}
\filldraw[fill=black] (1,1) circle (0.07cm);
\filldraw[fill=black] (1.5,2) circle (0.07cm);
\filldraw[fill=black] (0.5,2) circle (0.07cm);
\draw[-to,shorten >=2pt,thick] (0.5,2)--(1,1);
\draw[-to,shorten >=2pt,thick] (1,1)--(1.5,2);
\draw[-to,shorten >=2pt,thick] (1.5,2)--(0.5,2);
\draw  (1,0) node {$\vec C_3$};
\end{tikzpicture}
$\qquad$
\begin{tikzpicture}
\filldraw[fill=black] (1,1) circle (0.07cm);
\filldraw[fill=black] (1,2) circle (0.07cm);
\filldraw[fill=black] (2,1) circle (0.07cm);
\filldraw[fill=black] (2,2) circle (0.07cm);
\draw[-to,shorten >=2pt,thick] (1,2)--(1,1);
\draw[-to,shorten >=2pt,thick] (1,1)--(2,1);
\draw[-to,shorten >=2pt,thick] (2,1)--(2,2);
\draw[-to,shorten >=2pt,thick] (2,2)--(1,2);
\draw (1.5,0) node {$\vec C_4$};
\end{tikzpicture}
$\qquad$
\begin{tikzpicture}
\filldraw[fill=black] (1,1) circle (0.07cm);
\filldraw[fill=black] (1.5,2) circle (0.07cm);
\filldraw[fill=black] (0.5,2) circle (0.07cm);
\draw[-to,shorten >=2pt,thick] (1,1)--(1.5,2);
\draw  (1,0) node {$\vec K_2 \cup \vec E_1$};
\end{tikzpicture}
$\qquad$
\begin{tikzpicture}
\filldraw[fill=black] (1,1) circle (0.07cm);
\filldraw[fill=black] (1.5,2) circle (0.07cm);
\filldraw[fill=black] (0.5,2) circle (0.07cm);
\draw[-to,shorten >=2pt,thick] (1,1)--(1.5,2);
\draw[-to,shorten >=2pt,thick] (1,1)--(0.5,2);
\draw  (1,0) node {$\vec K_{1,2}$};
\end{tikzpicture}
$\qquad$
\begin{tikzpicture}
\filldraw[fill=black] (1,1) circle (0.07cm);
\filldraw[fill=black] (1.5,2) circle (0.07cm);
\filldraw[fill=black] (0.5,2) circle (0.07cm);
\draw[-to,shorten >=2pt,thick] (1.5,2)--(1,1);
\draw[-to,shorten >=2pt,thick] (0.5,2)--(1,1);
\draw  (1,0) node {$\vec K_{2,1}$};
\end{tikzpicture}
\end{center}
\caption{The definition of the orgraphs $\vec P_3$, $\vec C_3$, $\vec C_4$, $\vec K_2 \cup \vec E_1$, $\vec K_{1,2}$ and $\vec K_{2,1}$.}\label{fig:graph}
\end{figure}  
\begin{observation}$\quad$\newline
\begin{center}
$i(\vec P_3)\geq \frac{2}{5}$ and $i(\vec C_4)\geq \frac{2}{21}$.
\end{center}
\end{observation}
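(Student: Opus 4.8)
The plan is to prove both inequalities by exhibiting explicit extremal-type constructions, and in fact the \emph{same} construction works for both, which is exactly why one expects (as conjectured later in the paper) the extremal orgraphs of $\vec P_3$ and $\vec C_4$ to coincide. The construction is the balanced iterated lexicographic power $G_m:=\vec C_4^{\circ m}$. Concretely, $G_m$ has $4^m$ vertices split into four ``clouds'' $C_0,C_1,C_2,C_3$ of equal size, all edges between clouds are directed as in $\vec C_4$ (so $C_0\to C_1\to C_2\to C_3\to C_0$, with $C_0,C_2$ and $C_1,C_3$ mutually non-adjacent), and each cloud carries a copy of $G_{m-1}$. First I would write $p_m$ (resp.\ $q_m$) for the number of induced copies of $\vec P_3$ (resp.\ $\vec C_4$) in $G_m$ divided by $\binom{4^m}{3}$ (resp.\ $\binom{4^m}{4}$), so that $i(\vec P_3)\ge\limsup_m p_m$ and $i(\vec C_4)\ge\limsup_m q_m$, and then set up recursions for $p_m$ and $q_m$ by sorting the chosen $3$- or $4$-set according to how it meets the four top clouds.

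The key elementary observation is that if $u,v$ lie in the same top cloud and $w$ lies in another, then $w$ is joined to $u$ and to $v$ in exactly the same way (absent, in, or out). Since no vertex of $\vec P_3$ has the same relation to both of the remaining two vertices, a $3$-set distributed as $2+1$ over the clouds can never induce $\vec P_3$; a $3$-set inside a single cloud contributes recursively the count for $G_{m-1}$; and a transversal $3$-set induces precisely the orgraph spanned by the three chosen clouds in $\vec C_4$, which is $\vec P_3$ for each of the $\binom43=4$ triples of $\vec C_4$. Counting how a uniformly random $3$-set splits and letting the cloud sizes tend to infinity, this gives $p_m=\tfrac1{16}p_{m-1}+\tfrac38+o(1)$.

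For $\vec C_4$ I would run the same analysis with $4$-sets. A set entirely inside one cloud again contributes recursively. A transversal set induces $\vec C_4$ itself. It remains to see that the splits $3+1$, $2+2$ and $2+1+1$ contribute nothing: in the $3+1$ case the single outside vertex has a constant relation to the other three, impossible in $\vec C_4$; in the $2+2$ case either the two clouds are non-adjacent (the induced orgraph is disconnected) or they are complete to one another (some vertex then has out- or in-degree $\ge 2$), neither of which occurs in $\vec C_4$; in the $2+1+1$ case the two cloud-mates would be \emph{twin} vertices with respect to the other two, and $\vec C_4$ has no twins. Hence $q_m=\tfrac1{64}q_{m-1}+\tfrac3{32}+o(1)$. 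Both recursions are contractions, so $p_m\to\tfrac25$ and $q_m\to\tfrac2{21}$, giving $i(\vec P_3)\ge\tfrac25$ and $i(\vec C_4)\ge\tfrac2{21}$.

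I expect the only slightly fussy part to be the bookkeeping in the $\vec C_4$ case: one must check that \emph{every} non-transversal cloud distribution other than ``all four in one cloud'' really contributes zero, which amounts to the short structural facts above, and one must keep the $o(1)$ terms (arising because each cloud has size $4^{m-1}$ rather than being genuinely infinite) under control when solving the recursion in the limit. Both are routine, and no flag-algebra machinery is needed here — only the lexicographic product already defined above and a count of the triples of $\vec C_4$.
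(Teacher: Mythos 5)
Your proof is correct and uses essentially the same construction and recursion as the paper, which also takes $\lim_{m\to\infty}\vec C_4^{\circ m}$ and solves $x=\tfrac{3}{8}+\tfrac{1}{16}x$, $y=\tfrac{3}{32}+\tfrac{1}{64}y$. The only difference is that you explicitly justify why the mixed cloud-distributions ($2+1$, $3+1$, $2+2$, $2+1+1$) contribute nothing, a point the paper leaves implicit.
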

\begin{proof}
We have a look on the limit orgraph $\lim\limits_{n\to\infty}\left(\vec C_4\right)^{\circ n}$. Let $x$ be the density of $\vec P_3$ and $y$ the density of $\vec C_4$ in $\lim\limits_{n\to\infty}\left(\vec C_4\right)^{\circ n}$. Then 
\begin{eqnarray*}
x&=&1\cdot\frac{3}{4}\cdot\frac{2}{4}+1\cdot\frac{1}{4}\cdot\frac{1}{4}\cdot x,\\  
y&=&1\cdot\frac{3}{4}\cdot\frac{2}{4}\cdot\frac{1}{4}+1\cdot\frac{1}{4}\cdot\frac{1}{4}\cdot\frac{1}{4}\cdot y.
\end{eqnarray*} 
In both equations the first summand is the density of $\vec P_3$ (resp. $\vec C_4$), where each vertex of $\vec P_3$ (resp. $\vec C_4$) is from a different part. The second summand is the density that each vertex is from the same part. Thus, $x=\frac{2}{5}$ and $y=\frac{2}{21}$.
\end{proof}
In \cite{summaryCH} a conjecture of Thomassé can be found which claims that $i(\vec P_3)= \frac{2}{5}$. We conjecture that the extremal graphs are the same for $\vec P_3$ and $\vec C_4$. Maybe, the reason for this could be that every induced subgraph of $\vec C_4$ on $3$ vertices is isomorphic to $\vec P_3$. The best known upper bound for $i(\vec P_3)$  states that $i(\vec P_3)\leq \frac{12}{25}$ and was given by Bondy. This upper bound can be found in \cite{summaryCH} too. In section \ref{sec:mainresult} we will prove that $i(\vec P_3)\leq 0.4446$, $i(\vec C_4)\leq 0.1104$ and $6-4\sqrt{2}\leq i(\vec K_{1,2})=i(\vec K_{2,1})\leq 0.4644$. To do this we need some parts of Razborov's flag algebra method \cite{flagalgebra}. In section~\ref{sec:flag} we will roughly explain everything we need from it and show as an example for the application of the described method that $i(\vec P_3)\leq \frac{4}{7}$, $i(\vec C_3)=\frac{1}{4}$ and $i(\vec K_2 \cup \vec E_1)=\frac{3}{4}$.\newline
Let $\left[k\right]:=\left\{1,2,\ldots,k\right\}$. We write vectors underlined, e.g. $\underline{v}=\left(\underline{v}(1),\underline{v}(2),\underline{v}(3)\right)$ is a vector with three coordinates. A collection $V_1,\ldots,V_t$ of finite sets is a sunflower with center $C$ if $V_i\cap V_j=C$ for every two distinct $i,j\in\left[t\right]$.
\section{Flag Algebras}
\label{sec:flag} 
With his theory of flag algebras, Razborov developed a very strong tool for solving some classes of problems in extremal graph theory. For our proof we will just need a small part of his method, which can be thought of as an application of the Cauchy-Schwarz inequality in the theory of orgraphs. For a detailed study of flag algebras we refer the reader to Razborov's original paper \cite{flagalgebra}. In this section we will just define the most important ingredients for our calculation. Furthermore, we will give a short introduction into flag algebras.\newline
Let $\mathcal{O}$ be the family of all unlabeled orgraphs considered up to isomorphism. By $\mathcal{O}_\ell$ we denote the set of all $\Gamma\in\mathcal{O}$ with order $\ell$. A type $\sigma$ of order $k$ is a labeled orgraph of order $k$. Thus, each vertex of a type can be uniquely identified by its label. Usually, we use the elements of $\left[k\right]$ as labels.\newline
One denotes by $0$ the unique type of order $0$. Likewise one denotes by $1$ the unique type of order $1$.\newline
If $\sigma$ is a type of order $k$, we define a $\sigma$-flag as a pair $F=(\Gamma,\theta)$, where $\Gamma\in\mathcal{O}$ with $\left|\Gamma \geq k\right|$ and $\theta:\left[k\right]\rightarrow V_\Gamma$ is an injective function such that the labeled vertices define an induced embedding of $\sigma$ into $\Gamma$. An isomorphism between two $\sigma$-flags $(\Gamma,\theta)$ and $(\Gamma',\theta')$ is an isomorphism $\phi$ between $\Gamma$ and $\Gamma'$ where $\phi(\theta(i))=\theta'(i)$. We write $\mathcal{F}^{\sigma}$ for the set of all $\sigma$-flags up to isomorphism. Again, we define $\mathcal{F}^\sigma_\ell\subseteq\mathcal{F}^\sigma$ as the set of all $\sigma$-flags of order $\ell$. For example, $\mathcal{F}^0_\ell=\mathcal{O}_\ell$. If $\sigma$ is a type of order $k$, then $\mathcal{F}^\sigma_k$ consists only of $(\sigma,id)$. One denotes this element simply by $1_\sigma$. \newline
Follow the notation of \cite{flagalgebra}, we write $\mathbf{math}$ $\mathbf{bold}$ $\mathbf{face}$ for denoting random objects.
\begin{definition}(from \cite{flagalgebra})\newline
Fix a type $\sigma$ of order $k$, assume that integers $\ell,\ell_1,\ldots,\ell_t\geq k$ are such that
\begin{equation*}
\ell_1+\ldots+\ell_t-k(t-1)\leq \ell,
\end{equation*}
and $F=(M,\theta)\in \mathcal{F}_\ell^\sigma,\;F_1\in \mathcal{F}_{\ell_1}^\sigma,\ldots,\;\;F_t\in \mathcal{F}_{\ell_t}^\sigma$ are $\sigma$-flags. We define the (key) quantity $p(F_1,\ldots,F_t;F)\in\left[0,1\right]$ as follows. Choose in $V(M)$ uniformely at random a sunflower $(\mathbf{V_1},\ldots,\mathbf{V_t})$ with center $im(\theta)$ and $\forall i\;\left|\mathbf{V_i}\right|=\ell_i$. We let $p(F_1,\ldots,F_t;F)$ denote the probability of the event "$\forall i \in \left[t\right]\;F|_{\mathbf{V_i}}$ is isomorphic to $F_i$." When $t=1$, we use the notation $p(F_1,F)$ instead of $p(F_1;F)$.
\end{definition}
In the following we can identify a $\sigma$-flag $F$ by the probability $p(F,\hat{F})$, where $\hat{F}$ is an arbitrary large enough $\sigma$-flag. Thus, for example if we write
\begin{center}
\begin{tikzpicture}
\filldraw[fill=black] (1,0) circle (0.07cm);
\filldraw[fill=black] (1,1) circle (0.07cm);
\draw[-to,shorten >=2pt,thick] (1,0)--(1,1);
\draw (0.8,0) node {\small{$1$}};
\draw (1.2,-0.1) node {$\;$,};
\end{tikzpicture} 
\end{center}
we can think of it to be the normalized number of outneighbours of a fixed vertex (called "$1$") in an arbitrary large enough orgraph. Or if we write
\begin{center}
\begin{tikzpicture}
\filldraw[fill=black] (1,1) circle (0.07cm);
\filldraw[fill=black] (0,1) circle (0.07cm);
\filldraw[fill=black] (0.5,0) circle (0.07cm);
\draw[-to,shorten >=2pt,thick] (0,1)--(0.5,0);
\draw[-to,shorten >=2pt,thick] (0.5,0)--(1,1);
\draw[-to,shorten >=2pt,thick] (1,1)--(0,1);
\end{tikzpicture},
\end{center}
we can think of it to be the density of oriented triangles in an arbitrary large enough orgraph. Remark that these examples are not formal definitions. It should just allow an easier understanding of the following definitions. \newline
Now, we build formal finite linear combinations of $\sigma$-flags. We denote the space which contains these linear combinations by $\mathbb{R}\mathcal{F}^\sigma$. Roughly speaking, if we think of the $F$-density in a graph of sufficently large order for a flag $F\in\mathcal{F}^\sigma_{\ell}$, it seems sensible to call the subspace $\mathcal{K}^\sigma$ which is generated by all elements of the form
\begin{equation*}
F_1-\sum\limits_{\tilde{F}\in\mathcal{F}_{\tilde{\ell}}^\sigma}p(F_1,\tilde{F})\tilde{F},
\end{equation*}
where $F_1\in\mathcal{F}_{\ell_1}^\sigma$ with $\ell_1\leq\tilde{\ell}$, the subspace of "identically zero flag parameters". We want to illustrate this by an example. It can be seen by an easy double-counting argument that
\begin{equation}\label{eq:dcargu}
F_1=\sum\limits_{\tilde{F}\in\mathcal{F}_{\tilde{\ell}}^\sigma}p(F_1,\tilde{F})\tilde{F}.
\end{equation}
For example, the edge-density in an arbitrary large enough orgraph can be expressed as a linear combination of induced subgraph-densities of orgraphs of order $3$ in this graph. Thus,
\begin{equation*}
\parbox[c]{0.3cm}{
\begin{tikzpicture}
\filldraw[fill=black] (1,0) circle (0.07cm);
\filldraw[fill=black] (1,1) circle (0.07cm);

\draw[-to,shorten >=2pt,thick] (1,0)--(1,1);
\end{tikzpicture}}\;=\;
\parbox[c]{1cm}{
\begin{tikzpicture}
\filldraw[fill=black] (1,1) circle (0.07cm);
\filldraw[fill=black] (0,1) circle (0.07cm);
\filldraw[fill=black] (0.5,0) circle (0.07cm);
\draw[-to,shorten >=2pt,thick] (0,1)--(0.5,0);
\draw[-to,shorten >=2pt,thick] (0.5,0)--(1,1);
\draw[-to,shorten >=2pt,thick] (1,1)--(0,1);
\end{tikzpicture}}
\;+\;
\parbox[c]{1cm}{
\begin{tikzpicture}
\filldraw[fill=black] (1,1) circle (0.07cm);
\filldraw[fill=black] (0,1) circle (0.07cm);
\filldraw[fill=black] (0.5,0) circle (0.07cm);
\draw[-to,shorten >=2pt,thick] (0.5,0)--(0,1);
\draw[-to,shorten >=2pt,thick] (0.5,0)--(1,1);
\draw[-to,shorten >=2pt,thick] (1,1)--(0,1);
\end{tikzpicture}}
\;+\;
\frac{2}{3}
\parbox[c]{1cm}{
\begin{tikzpicture}
\filldraw[fill=black] (1,1) circle (0.07cm);
\filldraw[fill=black] (0,1) circle (0.07cm);
\filldraw[fill=black] (0.5,0) circle (0.07cm);
\draw[-to,shorten >=2pt,thick] (0.5,0)--(0,1);
\draw[-to,shorten >=2pt,thick] (0.5,0)--(1,1);
\end{tikzpicture}}
\;+\;
\frac{2}{3}
\parbox[c]{1cm}{
\begin{tikzpicture}
\filldraw[fill=black] (1,1) circle (0.07cm);
\filldraw[fill=black] (0,1) circle (0.07cm);
\filldraw[fill=black] (0.5,0) circle (0.07cm);
\draw[-to,shorten >=2pt,thick] (0.5,0)--(0,1);
\draw[-to,shorten >=2pt,thick] (1,1)--(0.5,0);
\end{tikzpicture}}
\;+\;
\frac{2}{3}
\parbox[c]{1cm}{
\begin{tikzpicture}
\filldraw[fill=black] (1,1) circle (0.07cm);
\filldraw[fill=black] (0,1) circle (0.07cm);
\filldraw[fill=black] (0.5,0) circle (0.07cm);
\draw[-to,shorten >=2pt,thick] (0,1)--(0.5,0);
\draw[-to,shorten >=2pt,thick] (1,1)--(0.5,0);
\end{tikzpicture}}
\;+\;
\frac{1}{3}
\parbox[c]{1cm}{
\begin{tikzpicture}
\filldraw[fill=black] (1,1) circle (0.07cm);
\filldraw[fill=black] (0,1) circle (0.07cm);
\filldraw[fill=black] (0.5,0) circle (0.07cm);
\draw[-to,shorten >=2pt,thick] (1,1)--(0,1);
\end{tikzpicture}}\;\;.
\end{equation*}
Now it is natural to define $\mathcal{A}^\sigma:=\mathbb{R}\mathcal{F}^\sigma / \mathcal{K}^\sigma $ as the flag algebra of the type $\sigma$. This means, we factor $\mathbb{R}\mathcal{F}^\sigma$ by the subspace $\mathcal{K}^\sigma$. In Lemma 2.4 of \cite{flagalgebra} Razborov shows that $\mathcal{A}^\sigma$ is naturally endowed with the structure of a commutative associative algebra. He defines a bilinear mapping for flags in the following way. Let $\sigma$ be a type of order $k$. For two $\sigma$-flags $F_1\in\mathcal{F}_{\ell_1}^\sigma$, $F_2\in\mathcal{F}_{\ell_2}^\sigma$ and $\ell\geq \ell_1+\ell_2-k$ we define
\begin{equation*}
F_1\cdot F_2:=\sum\limits_{F\in\mathcal{F}_\ell^\sigma}p(F_1,F_2;F)F.
\end{equation*}
Remark that this definition is not well defined on $\mathbb{R}\mathcal{F}^\sigma$, but on $\mathcal{A}^\sigma$ it is. The disadvantage of this definition is that this product is just asymptotically the same as the product one would expect, if we interpret the $\sigma$-flags in the above way, because
\begin{equation*}
p(F_1,F_2;F)=p(F_1,F)p(F_2,F)+o(1).
\end{equation*}
That is why flagalgebraic proofs using this product operation are only asymptotically true.\newline
Additionally, we want to remark in a bit crude words, that the function $F\rightarrow p(F,\hat{F})$ for very large $\hat{F}$ asymptotically corresponds to an algebra homomorphism $\phi \in \text{Hom}(\mathcal{A}^\sigma,\mathbb{R})$. Razborov now considers the set
\begin{equation*}
\text{Hom}^+(\mathcal{A}^\sigma,\mathbb{R}):=\left\{\phi \in \text{Hom}(\mathcal{A}^\sigma,\mathbb{R}) | \forall F \in \mathcal{F}^\sigma \; \phi (F)\geq 0\right\}
\end{equation*}
and shows in Corollary 3.4 of \cite{flagalgebra} that $\text{Hom}^+(\mathcal{A}^\sigma,\mathbb{R})$ captures all asymptotically true relations in extremal combinatorics.\newline
Thus, we have seen the basic idea of flag algebras. It is useful to define for $f,g\in \mathcal{A}^0$ that $f\geq g$, if $\forall \phi \in \text{Hom}^+(\mathcal{A}^\sigma,\mathbb{R}) \left(\phi(f)\geq\phi(g)\right)$. This is a partial preorder on $\mathcal{A}^0$. Now we want to turn our attention to an application of the Cauchy-Schwarz inequality in flag algebras.  \newline
We define the averaging operator $\left\llbracket \cdot \right\rrbracket_\sigma\;:\;\mathcal{A}^\sigma\rightarrow \mathcal{A}^0$ as follows. For a type $\sigma$ of order $k$ and $F=(\Gamma,\theta)\in\mathcal{F}^\sigma$, let $q_\sigma(F)$ be the probability that a uniformely at random chosen injective mapping $\mbox{\boldmath$\theta$}:\;\left[k\right]\rightarrow V_\Gamma$ defines an induced embedding of $\sigma$ in $\Gamma$ and the resulting $\sigma$-flag $(\Gamma,\mbox{\boldmath$\theta$})$ is isomorphic to $F$. Now, we define 
\begin{equation*}
\left\llbracket F\right\rrbracket_\sigma:=q_{\sigma}(F)\cdot \Gamma
\end{equation*}
partially on $\mathcal{F}^\sigma$. 
In section 2.2 in \cite{flagalgebra}, Razborov proves that this operator can be extended linearly to $\mathcal{A}^\sigma$ and he explains why it corresponds to averaging.
\begin{theorem}\label{th:cs}\textit{Cauchy-Schwarz inequality}(from \cite{flagalgebra}, Theorem 3.14)\newline
Let $f,g\in\mathcal{F}^\sigma$, then
\begin{equation*}
\left\llbracket f^2\right\rrbracket_\sigma \cdot\left\llbracket g^2\right\rrbracket_\sigma \geq \left\llbracket fg\right\rrbracket_\sigma^2.
\end{equation*}
In particular $(g=1_\sigma)$,
\begin{equation*}
\left\llbracket f^2\right\rrbracket_\sigma \cdot \sigma \geq \left\llbracket f\right\rrbracket_\sigma^2,
\end{equation*}
which in turn implies
\begin{equation*}
\left\llbracket f^2\right\rrbracket_\sigma\geq 0.
\end{equation*}
\end{theorem}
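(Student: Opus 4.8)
The plan is to deduce the inequality from the ordinary Cauchy--Schwarz inequality for expectations, once the averaging operator $\llbracket\cdot\rrbracket_\sigma$ has been recognised as a genuine expectation. By the definition of the preorder on $\mathcal{A}^0$ it suffices to fix an arbitrary $\phi\in\text{Hom}^+(\mathcal{A}^0,\mathbb{R})$ and prove the scalar inequality $\phi(\llbracket f^2\rrbracket_\sigma)\cdot\phi(\llbracket g^2\rrbracket_\sigma)\geq\phi(\llbracket fg\rrbracket_\sigma)^2$. The key input --- this is exactly what Razborov establishes when he explains in Section~2.2 of \cite{flagalgebra} that $\llbracket\cdot\rrbracket_\sigma$ ``corresponds to averaging'' --- is that $\phi$ gives rise to a random homomorphism $\psi\in\text{Hom}^+(\mathcal{A}^\sigma,\mathbb{R})$, obtained by choosing a random $\sigma$-root, such that $\phi(\llbracket A\rrbracket_\sigma)$ equals a nonnegative constant depending only on $\phi$ and $\sigma$ times $\mathbb{E}[\psi(A)]$ for every $A\in\mathcal{A}^\sigma$. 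Concretely, in a large orgraph $\Gamma$ realising $\phi$ this constant times expectation is computed over a uniformly random injection $\theta\colon[k]\to V_\Gamma$ for which $(\Gamma,\theta)$ is a $\sigma$-flag; the only subtlety is that, because the flag product agrees with the honest product of densities only up to $o(1)$, the identification is read off in the limit $|\Gamma|\to\infty$.

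Granting this, the rest is short. Since $\psi$ is (almost surely) an algebra homomorphism, $\psi(f^2)=\psi(f)^2$, $\psi(g^2)=\psi(g)^2$ and $\psi(fg)=\psi(f)\psi(g)$, so with $X:=\psi(f)$ and $Y:=\psi(g)$, and writing $\lambda\geq0$ for the constant above, we obtain
\[
\phi(\llbracket f^2\rrbracket_\sigma)\,\phi(\llbracket g^2\rrbracket_\sigma)=\lambda^2\,\mathbb{E}[X^2]\,\mathbb{E}[Y^2]\geq\lambda^2\,\mathbb{E}[XY]^2=\phi(\llbracket fg\rrbracket_\sigma)^2,
\]
the middle step being the classical Cauchy--Schwarz inequality for the real random variables $X$ and $Y$ (the degenerate case $\lambda=0$, i.e.\ $\phi(\sigma)=0$, being trivial since then all three terms vanish). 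As $\phi$ was arbitrary, this is precisely $\llbracket f^2\rrbracket_\sigma\cdot\llbracket g^2\rrbracket_\sigma\geq\llbracket fg\rrbracket_\sigma^2$.

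For the two specialisations, take $g=1_\sigma$, the multiplicative identity of $\mathcal{A}^\sigma$: unwinding $\llbracket\cdot\rrbracket_\sigma$ and using $0<q_\sigma(1_\sigma)\leq1$ together with $\llbracket f\rrbracket_\sigma^2\geq0$ turns the main inequality into $\llbracket f^2\rrbracket_\sigma\cdot\sigma\geq\llbracket f\rrbracket_\sigma^2$; then $\llbracket f^2\rrbracket_\sigma\geq0$ follows since $\sigma\geq0$, while if $\phi(\sigma)=0$ one also has $\phi(\llbracket f^2\rrbracket_\sigma)=0$ because every orgraph occurring in $\llbracket f^2\rrbracket_\sigma$ contains an induced copy of $\sigma$ and hence has $\phi$-value $0$. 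The hard part will be the key input of the first paragraph: rigorously constructing the random homomorphism $\psi$ and verifying that $\phi\circ\llbracket\cdot\rrbracket_\sigma$ is a constant multiple of $\mathbb{E}[\psi(\cdot)]$, plus the $o(1)$ bookkeeping forced by the merely asymptotic flag product. Once that is in place, the theorem is nothing more than $L^2$ Cauchy--Schwarz applied to $X=\psi(f)$ and $Y=\psi(g)$.
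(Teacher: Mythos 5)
The paper does not prove this theorem; it is imported verbatim from Razborov's Theorem~3.14 in \cite{flagalgebra} and used as a black box. So there is no in-paper argument to compare against, and the only sensible comparison is with Razborov's own proof.

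Your proposal follows exactly Razborov's route: reduce the positivity statement in $\mathcal{A}^0$ to a scalar inequality by evaluating at an arbitrary $\phi\in\mathrm{Hom}^+(\mathcal{A}^0,\mathbb{R})$; invoke the existence of a random ``rooted'' homomorphism $\psi\in\mathrm{Hom}^+(\mathcal{A}^\sigma,\mathbb{R})$ satisfying $\phi(\llbracket a\rrbracket_\sigma)=\phi(\sigma)\,\mathbb{E}[\psi(a)]$ for all $a\in\mathcal{A}^\sigma$; and then apply ordinary $L^2$ Cauchy--Schwarz to $X=\psi(f)$, $Y=\psi(g)$, using that $\psi$ is almost surely multiplicative so that $\psi(f^2)=X^2$, $\psi(g^2)=Y^2$, $\psi(fg)=XY$. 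This is how the theorem is in fact proved. The identity $\phi\circ\llbracket\cdot\rrbracket_\sigma=\phi(\sigma)\,\mathbb{E}[\psi(\cdot)]$ is the content of Razborov's Theorem~3.5 (existence of the rooted probability measure $\mathbf{P}^\sigma$) and the surrounding lemmas, which are in Section~3 of \cite{flagalgebra}, not in Section~2.2 as you cite; Section~2.2 only defines the averaging operator. You correctly identify that this is where the entire difficulty lies, and you do not supply it, so as written your argument is a reduction of Theorem~\ref{th:cs} to Razborov's Theorem~3.5 rather than a self-contained proof. The more hands-on alternative you gesture at --- working inside a concrete large orgraph $\Gamma$, recognising $\llbracket f^2\rrbracket_\sigma(\Gamma)$ as, up to $o(1)$, a normalised sum of squares $f(\Gamma,\theta)^2$ over injections $\theta$ inducing $\sigma$, applying finite Cauchy--Schwarz and passing to the limit --- would avoid Theorem~3.5 at the cost of the $o(1)$ bookkeeping, but you do not carry that out either.

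One precision on the specialisations. Since $\llbracket 1_\sigma\rrbracket_\sigma=q_\sigma(1_\sigma)\cdot\sigma$ with $q_\sigma(1_\sigma)=|\mathrm{Aut}(\sigma)|/k!\in(0,1]$, setting $g=1_\sigma$ in the main inequality yields only $q_\sigma(1_\sigma)\,\llbracket f^2\rrbracket_\sigma\cdot\sigma\ge\llbracket f\rrbracket_\sigma^2$. To discard the factor $q_\sigma(1_\sigma)$ you must first know $\llbracket f^2\rrbracket_\sigma\cdot\sigma\ge 0$; this follows because the right-hand side $\llbracket f\rrbracket_\sigma^2$ is a square, but the logical order should be made explicit: main inequality $\Rightarrow$ $\llbracket f^2\rrbracket_\sigma\cdot\sigma\ge 0$ $\Rightarrow$ second displayed inequality $\Rightarrow$ third. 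In the homomorphism picture it is cleaner to read off the third inequality directly from $\phi(\llbracket f^2\rrbracket_\sigma)=\phi(\sigma)\,\mathbb{E}[\psi(f)^2]\ge 0$. Your separate handling of the degenerate case $\phi(\sigma)=0$ is correct and necessary.
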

It is an easy consequence of Theorem~\ref{th:cs} that $\left\llbracket\underline{v}^TA\underline{v}\right\rrbracket_\sigma\geq 0$, if $\underline{v}$ is a vector of $n$ $\sigma$-flags and $A\in\mathbb{R}^{n\times n}$ is symmetric positive semidefinite.
\begin{example}
As an example, we want to show with the described method of flag algebras that $i(\vec P_3)\leq \frac{4}{7}$ and that $i(\vec C_3)=\frac{1}{4}$. For this purpose we look at the following equalities.
\begin{equation*}
1  = 
\parbox[c]{1cm}{\begin{tikzpicture}
\filldraw[fill=black] (1,1) circle (0.07cm);
\filldraw[fill=black] (0,1) circle (0.07cm);
\filldraw[fill=black] (0.5,0) circle (0.07cm);
\end{tikzpicture}}
\;\;+\;
\parbox[c]{1cm}{\begin{tikzpicture}
\filldraw[fill=black] (1,1) circle (0.07cm);
\filldraw[fill=black] (0,1) circle (0.07cm);
\filldraw[fill=black] (0.5,0) circle (0.07cm);
\draw[-to,shorten >=2pt,thick] (0.5,0)--(1,1);
\end{tikzpicture}}
\;\;+\;
\parbox[c]{1cm}{\begin{tikzpicture}
\filldraw[fill=black] (1,1) circle (0.07cm);
\filldraw[fill=black] (0,1) circle (0.07cm);
\filldraw[fill=black] (0.5,0) circle (0.07cm);
\draw[-to,shorten >=2pt,thick] (0.5,0)--(1,1);
\draw[-to,shorten >=2pt,thick] (0.5,0)--(0,1);
\end{tikzpicture}}
\;\;+\;
\parbox[c]{1cm}{\begin{tikzpicture}
\filldraw[fill=black] (1,1) circle (0.07cm);
\filldraw[fill=black] (0,1) circle (0.07cm);
\filldraw[fill=black] (0.5,0) circle (0.07cm);
\draw[-to,shorten >=2pt,thick] (0.5,0)--(1,1);
\draw[-to,shorten >=2pt,thick] (0,1)--(0.5,0);
\end{tikzpicture}}
\;\;+\;
\parbox[c]{1cm}{\begin{tikzpicture}
\filldraw[fill=black] (1,1) circle (0.07cm);
\filldraw[fill=black] (0,1) circle (0.07cm);
\filldraw[fill=black] (0.5,0) circle (0.07cm);
\draw[-to,shorten >=2pt,thick] (1,1)--(0.5,0);
\draw[-to,shorten >=2pt,thick] (0,1)--(0.5,0);
\end{tikzpicture}}
\;\;+\;
\parbox[c]{1cm}{\begin{tikzpicture}
\filldraw[fill=black] (1,1) circle (0.07cm);
\filldraw[fill=black] (0,1) circle (0.07cm);
\filldraw[fill=black] (0.5,0) circle (0.07cm);
\draw[-to,shorten >=2pt,thick] (0.5,0)--(1,1);
\draw[-to,shorten >=2pt,thick] (0.5,0)--(0,1);
\draw[-to,shorten >=2pt,thick] (1,1)--(0,1);
\end{tikzpicture}}
\;\;+\;
\parbox[c]{1cm}{\begin{tikzpicture}
\filldraw[fill=black] (1,1) circle (0.07cm);
\filldraw[fill=black] (0,1) circle (0.07cm);
\filldraw[fill=black] (0.5,0) circle (0.07cm);
\draw[-to,shorten >=2pt,thick] (0.5,0)--(1,1);
\draw[-to,shorten >=2pt,thick] (0,1)--(0.5,0);
\draw[-to,shorten >=2pt,thick] (1,1)--(0,1);
\end{tikzpicture}} 
\end{equation*}
\begin{eqnarray*}
\left\llbracket\,\left(
\parbox[c]{0.7cm}{
\begin{tikzpicture}
\filldraw[fill=black] (1,0) circle (0.07cm);
\filldraw[fill=black] (1,1) circle (0.07cm);
\draw (0.8,0) node {\small{$1$}};
\end{tikzpicture}}
\right)^2\,\right\rrbracket_1
\;=\;\quad
\left\llbracket \,
\parbox[c]{1cm}{
\begin{tikzpicture}
\filldraw[fill=black] (1,1) circle (0.07cm);
\filldraw[fill=black] (0,1) circle (0.07cm);
\filldraw[fill=black] (0.5,0) circle (0.07cm);
\draw (0.3,0) node {\small{$1$}};
\end{tikzpicture}}+
\parbox[c]{1cm}{\begin{tikzpicture}
\filldraw[fill=black] (1,1) circle (0.07cm);
\filldraw[fill=black] (0,1) circle (0.07cm);
\filldraw[fill=black] (0.5,0) circle (0.07cm);
\draw[-to,shorten >=2pt,thick] (1,1)--(0,1);
\draw (0.3,0) node {\small{$1$}};
\end{tikzpicture}}
\; \; \right\rrbracket_1 & = & \;\;
\parbox[c]{1cm}{\begin{tikzpicture}
\filldraw[fill=black] (1,1) circle (0.07cm);
\filldraw[fill=black] (0,1) circle (0.07cm);
\filldraw[fill=black] (0.5,0) circle (0.07cm);
\end{tikzpicture}}
\;\;+\;\frac{1}{3}
\parbox[c]{1cm}{\begin{tikzpicture}
\filldraw[fill=black] (1,1) circle (0.07cm);
\filldraw[fill=black] (0,1) circle (0.07cm);
\filldraw[fill=black] (0.5,0) circle (0.07cm);
\draw[-to,shorten >=2pt,thick] (0.5,0)--(1,1);
\end{tikzpicture}} \\
\frac{3}{2}
\left\llbracket\,\left(
\parbox[c]{0.7cm}{
\begin{tikzpicture}
\filldraw[fill=black] (1,0) circle (0.07cm);
\filldraw[fill=black] (1,1) circle (0.07cm);
\draw[-to,shorten >=2pt,thick] (1,0)--(1,1);
\draw (0.8,0) node {\small{$1$}};
\end{tikzpicture}}
\right)^2\,\right\rrbracket_1
\;=\;
\frac{3}{2}\left\llbracket \,
\parbox[c]{1cm}{
\begin{tikzpicture}
\filldraw[fill=black] (1,1) circle (0.07cm);
\filldraw[fill=black] (0,1) circle (0.07cm);
\filldraw[fill=black] (0.5,0) circle (0.07cm);
\draw[-to,shorten >=2pt,thick] (0.5,0)--(1,1);
\draw[-to,shorten >=2pt,thick] (0.5,0)--(0,1);

\draw (0.3,0) node {\small{$1$}};
\end{tikzpicture}}
+
\parbox[c]{1cm}{\begin{tikzpicture}
\filldraw[fill=black] (1,1) circle (0.07cm);
\filldraw[fill=black] (0,1) circle (0.07cm);
\filldraw[fill=black] (0.5,0) circle (0.07cm);
\draw[-to,shorten >=2pt,thick] (0.5,0)--(1,1);
\draw[-to,shorten >=2pt,thick] (0.5,0)--(0,1);
\draw[-to,shorten >=2pt,thick] (1,1)--(0,1);
\draw (0.3,0) node {\small{$1$}};
\end{tikzpicture}}
\; \; \right\rrbracket_1 & = & \frac{1}{2}
\parbox[c]{1cm}{\begin{tikzpicture}
\filldraw[fill=black] (1,1) circle (0.07cm);
\filldraw[fill=black] (0,1) circle (0.07cm);
\filldraw[fill=black] (0.5,0) circle (0.07cm);
\draw[-to,shorten >=2pt,thick] (0.5,0)--(1,1);
\draw[-to,shorten >=2pt,thick] (0.5,0)--(0,1);
\end{tikzpicture}}
\;\;+\;\frac{1}{2}
\parbox[c]{1cm}{\begin{tikzpicture}
\filldraw[fill=black] (1,1) circle (0.07cm);
\filldraw[fill=black] (0,1) circle (0.07cm);
\filldraw[fill=black] (0.5,0) circle (0.07cm);
\draw[-to,shorten >=2pt,thick] (0.5,0)--(1,1);
\draw[-to,shorten >=2pt,thick] (0.5,0)--(0,1);
\draw[-to,shorten >=2pt,thick] (1,1)--(0,1);
\end{tikzpicture}} \\
\frac{3}{2}
\left\llbracket\,\left(
\parbox[c]{0.7cm}{
\begin{tikzpicture}
\filldraw[fill=black] (1,0) circle (0.07cm);
\filldraw[fill=black] (1,1) circle (0.07cm);
\draw[-to,shorten >=2pt,thick] (1,1)--(1,0);
\draw (0.8,0) node {\small{$1$}};
\end{tikzpicture}}
\right)^2\,\right\rrbracket_1
\;=\;
\frac{3}{2}
\left\llbracket \,
\parbox[c]{1cm}{
\begin{tikzpicture}
\filldraw[fill=black] (1,1) circle (0.07cm);
\filldraw[fill=black] (0,1) circle (0.07cm);
\filldraw[fill=black] (0.5,0) circle (0.07cm);
\draw[-to,shorten >=2pt,thick] (1,1)--(0.5,0);
\draw[-to,shorten >=2pt,thick] (0,1)--(0.5,0);

\draw (0.3,0) node {\small{$1$}};
\end{tikzpicture}}
+
\parbox[c]{1cm}{\begin{tikzpicture}
\filldraw[fill=black] (1,1) circle (0.07cm);
\filldraw[fill=black] (0,1) circle (0.07cm);
\filldraw[fill=black] (0.5,0) circle (0.07cm);
\draw[-to,shorten >=2pt,thick] (1,1)--(0.5,0);
\draw[-to,shorten >=2pt,thick] (0,1)--(0.5,0);
\draw[-to,shorten >=2pt,thick] (1,1)--(0,1);
\draw (0.3,0) node {\small{$1$}};
\end{tikzpicture}}
\; \; \right\rrbracket_1 & = & \frac{1}{2}
\parbox[c]{1cm}{\begin{tikzpicture}
\filldraw[fill=black] (1,1) circle (0.07cm);
\filldraw[fill=black] (0,1) circle (0.07cm);
\filldraw[fill=black] (0.5,0) circle (0.07cm);
\draw[-to,shorten >=2pt,thick] (1,1)--(0.5,0);
\draw[-to,shorten >=2pt,thick] (0,1)--(0.5,0);
\end{tikzpicture}}
\;\;+\;\frac{1}{2}
\parbox[c]{1cm}{\begin{tikzpicture}
\filldraw[fill=black] (1,1) circle (0.07cm);
\filldraw[fill=black] (0,1) circle (0.07cm);
\filldraw[fill=black] (0.5,0) circle (0.07cm);
\draw[-to,shorten >=2pt,thick] (0.5,0)--(1,1);
\draw[-to,shorten >=2pt,thick] (0.5,0)--(0,1);
\draw[-to,shorten >=2pt,thick] (1,1)--(0,1);
\end{tikzpicture}}
\end{eqnarray*}
Now these equations tell us everything we need. For an easier notation we denote the flag
\begin{tikzpicture}[scale=0.3]
\filldraw[fill=black] (1,0) circle (0.07cm);
\filldraw[fill=black] (1,1) circle (0.07cm);
\draw[-to,shorten >=1pt] (1,0)--(1,1);
\end{tikzpicture} by $\rho$.
Notice that each evaluation of $\rho$ with an orgraph homomorphism $\phi\in\text{Hom}^+(\mathcal{A}^\sigma,\mathbb{R})$ belong to the edge-density of the orgraph "corresponding" to $\phi$.\newline
Now, we have
\begin{eqnarray*}
\parbox[c]{1cm}{\begin{tikzpicture}
\filldraw[fill=black] (1,1) circle (0.07cm);
\filldraw[fill=black] (0,1) circle (0.07cm);
\filldraw[fill=black] (0.5,0) circle (0.07cm);
\draw[-to,shorten >=2pt,thick] (0.5,0)--(1,1);
\draw[-to,shorten >=2pt,thick] (0,1)--(0.5,0);
\end{tikzpicture}}
\;\;+\;
\parbox[c]{1cm}{\begin{tikzpicture}
\filldraw[fill=black] (1,1) circle (0.07cm);
\filldraw[fill=black] (0,1) circle (0.07cm);
\filldraw[fill=black] (0.5,0) circle (0.07cm);
\draw[-to,shorten >=2pt,thick] (0.5,0)--(1,1);
\draw[-to,shorten >=2pt,thick] (0,1)--(0.5,0);
\draw[-to,shorten >=2pt,thick] (1,1)--(0,1);
\end{tikzpicture}}
& \leq & 1 \;-\;
\left\llbracket\,\left(
\parbox[c]{0.7cm}{
\begin{tikzpicture}
\filldraw[fill=black] (1,0) circle (0.07cm);
\filldraw[fill=black] (1,1) circle (0.07cm);
\draw (0.8,0) node {\small{$1$}};
\end{tikzpicture}}
\right)^2\,\right\rrbracket_1
\;-\;
\frac{3}{2}\left\llbracket\,\left(
\parbox[c]{0.7cm}{
\begin{tikzpicture}
\filldraw[fill=black] (1,0) circle (0.07cm);
\filldraw[fill=black] (1,1) circle (0.07cm);
\draw[-to,shorten >=2pt,thick] (1,0)--(1,1);
\draw (0.8,0) node {\small{$1$}};
\end{tikzpicture}}
\right)^2\,\right\rrbracket_1
\;-\;
\frac{3}{2}\left\llbracket\,\left(
\parbox[c]{0.7cm}{
\begin{tikzpicture}
\filldraw[fill=black] (1,0) circle (0.07cm);
\filldraw[fill=black] (1,1) circle (0.07cm);
\draw[-to,shorten >=2pt,thick] (1,1)--(1,0);
\draw (0.8,0) node {\small{$1$}};
\end{tikzpicture}}
\right)^2\,\right\rrbracket_1 \\
& \leq & 1-(1-\rho)^2-\frac{3}{2}\left(\frac{\rho}{2}\right)^2-\frac{3}{2}\left(\frac{\rho}{2}\right)^2=-\frac{7}{4}\rho^2+2\rho
\end{eqnarray*}
The righthandside depends just on the edge-density $\rho$, which can be minimized by taking $\rho=\frac{4}{7}$. Thus, we have
\begin{equation*}
\parbox[c]{1cm}{\begin{tikzpicture}
\filldraw[fill=black] (1,1) circle (0.07cm);
\filldraw[fill=black] (0,1) circle (0.07cm);
\filldraw[fill=black] (0.5,0) circle (0.07cm);
\draw[-to,shorten >=2pt,thick] (0.5,0)--(1,1);
\draw[-to,shorten >=2pt,thick] (0,1)--(0.5,0);
\end{tikzpicture}}
\;\;+\;
\parbox[c]{1cm}{\begin{tikzpicture}
\filldraw[fill=black] (1,1) circle (0.07cm);
\filldraw[fill=black] (0,1) circle (0.07cm);
\filldraw[fill=black] (0.5,0) circle (0.07cm);
\draw[-to,shorten >=2pt,thick] (0.5,0)--(1,1);
\draw[-to,shorten >=2pt,thick] (0,1)--(0.5,0);
\draw[-to,shorten >=2pt,thick] (1,1)--(0,1);
\end{tikzpicture}}
\; \leq \; \frac{4}{7} \Rightarrow
\parbox[c]{1cm}{\begin{tikzpicture}
\filldraw[fill=black] (1,1) circle (0.07cm);
\filldraw[fill=black] (0,1) circle (0.07cm);
\filldraw[fill=black] (0.5,0) circle (0.07cm);
\draw[-to,shorten >=2pt,thick] (0.5,0)--(1,1);
\draw[-to,shorten >=2pt,thick] (0,1)--(0.5,0);
\end{tikzpicture}}
\;\leq\;\frac{4}{7} \Rightarrow i(\vec P_3)\leq\frac{4}{7}.
\end{equation*}
Additionally, we know that
\begin{equation*}
\parbox[c]{1cm}{\begin{tikzpicture}
\filldraw[fill=black] (1,1) circle (0.07cm);
\filldraw[fill=black] (0,1) circle (0.07cm);
\filldraw[fill=black] (0.5,0) circle (0.07cm);
\draw[-to,shorten >=2pt,thick] (0.5,0)--(1,1);
\draw[-to,shorten >=2pt,thick] (0,1)--(0.5,0);
\draw[-to,shorten >=2pt,thick] (1,1)--(0,1);
\end{tikzpicture}}
\;\leq\;-\frac{7}{4}\rho^2+2\rho.
\end{equation*}
We can assume, that there are extremal orgraphs for $i(\vec C_3)$, where the edge-density $\rho=1$, because in every extremal orgraph we can fill the missing edges in an abitrary way by new edges without decreasing the number of $\vec C_3$ in this extremal orgraph. Thus, we get
\begin{equation*}
\parbox[c]{1cm}{\begin{tikzpicture}
\filldraw[fill=black] (1,1) circle (0.07cm);
\filldraw[fill=black] (0,1) circle (0.07cm);
\filldraw[fill=black] (0.5,0) circle (0.07cm);
\draw[-to,shorten >=2pt,thick] (0.5,0)--(1,1);
\draw[-to,shorten >=2pt,thick] (0,1)--(0.5,0);
\draw[-to,shorten >=2pt,thick] (1,1)--(0,1);
\end{tikzpicture}}
\;\leq\;\frac{1}{4} \Rightarrow i(\vec C_3)\leq \frac{1}{4}.
\end{equation*}
On the other hand, if we have a look at $\lim\limits_{n\rightarrow\infty}\left(\vec C_3\right)^{\circ n}$, it is easy to see that $i(\vec C_3)\geq\frac{1}{4}$. 
\end{example}
\begin{example}
In our second example we show that $i(\vec K_2 \cup \vec E_1)=\frac{3}{4}$. Let $T_n$ be an arbitrary complete orgraph on $n$ vertices, thus a tournament. By a look at the limit graph $\lim\limits_{n \to \infty}T_n\cup T_n$ its easy to see that $i(\vec K_2 \cup \vec E_1)\geq\frac{3}{4}$. We define a vector of $1$-flags as
\begin{equation*}
\underline{g}:=\left(
\parbox[c]{1cm}{
\begin{tikzpicture}
\filldraw[fill=black] (1,0) circle (0.07cm);
\filldraw[fill=black] (1,1) circle (0.07cm);
\draw (0.8,0) node {\small{$1$}};
\end{tikzpicture}},
\parbox[c]{1cm}{
\begin{tikzpicture}
\filldraw[fill=black] (1,0) circle (0.07cm);
\filldraw[fill=black] (1,1) circle (0.07cm);
\draw[-to,shorten >=2pt,thick] (1,0)--(1,1);
\draw (0.8,0) node {\small{$1$}};
\end{tikzpicture}},
\parbox[c]{1cm}{
\begin{tikzpicture}
\filldraw[fill=black] (1,0) circle (0.07cm);
\filldraw[fill=black] (1,1) circle (0.07cm);
\draw[-to,shorten >=2pt,thick] (1,1)--(1,0);
\draw (0.8,0) node {\small{$1$}};
\end{tikzpicture}}
\right)^T.
\end{equation*}
Now, we get with the help of a positive semidefinite matrix
\begin{eqnarray*}
\frac{3}{4}-\;
\parbox[c]{1cm}{\begin{tikzpicture}
\filldraw[fill=black] (1,1) circle (0.07cm);
\filldraw[fill=black] (0,1) circle (0.07cm);
\filldraw[fill=black] (0.5,0) circle (0.07cm);
\draw[-to,shorten >=2pt,thick] (0.5,0)--(1,1);
\end{tikzpicture}} 
& \geq &
\frac{3}{4} 
\left\llbracket \underline{g}^T 
\left(
\begin{array}{*{3}{c}}
 1 & -1 & -1 \\
-1 &  1 &  1 \\
-1 &  1 &  1
\end{array}
\right)
\underline{g}
\right\rrbracket_1 \;\geq\;0 \\
\Rightarrow i(\vec K_2 \cup \vec E_1) & \leq & \frac{3}{4}.
\end{eqnarray*}
\end{example}
\section{Main Results}
\label{sec:mainresult}
A lot of calculations in our proofs deal with the $582$ elements of $\mathcal{O}_5=\mathcal{F}_5^0$ and the $15$ elements of $\mathcal{F}_3^1$ which are defined in figure \ref{fig:Fdef}. Let $\underline{f^1}$ be the vector with $\underline{f^1}(i):=F^1_i$ for $i\in\left\{0,1,\ldots,14\right\}$. 
\begin{figure}
\begin{center}
\begin{tikzpicture}
\filldraw[fill=black] (1,1) circle (0.07cm);
\filldraw[fill=black] (0,1) circle (0.07cm);
\filldraw[fill=black] (0.5,0) circle (0.07cm);
\draw (0.3,0) node {\small{$1$}};
\draw  (0.5,-0.5) node {$\vec F^1_0$};
\end{tikzpicture}
$\quad$
\begin{tikzpicture}
\filldraw[fill=black] (1,1) circle (0.07cm);
\filldraw[fill=black] (0,1) circle (0.07cm);
\filldraw[fill=black] (0.5,0) circle (0.07cm);
\draw (0.3,0) node {\small{$1$}};
\draw  (0.5,-0.5) node {$\vec F^1_1$};
\draw[-to,shorten >=2pt,thick] (0.5,0)--(1,1);
\end{tikzpicture}$\quad$
\begin{tikzpicture}
\filldraw[fill=black] (1,1) circle (0.07cm);
\filldraw[fill=black] (0,1) circle (0.07cm);
\filldraw[fill=black] (0.5,0) circle (0.07cm);
\draw (0.3,0) node {\small{$1$}};
\draw  (0.5,-0.5) node {$\vec F^1_2$};
\draw[-to,shorten >=2pt,thick] (0.5,0)--(1,1);
\draw[-to,shorten >=2pt,thick] (0.5,0)--(0,1);
\end{tikzpicture}$\quad$
\begin{tikzpicture}
\filldraw[fill=black] (1,1) circle (0.07cm);
\filldraw[fill=black] (0,1) circle (0.07cm);
\filldraw[fill=black] (0.5,0) circle (0.07cm);
\draw (0.3,0) node {\small{$1$}};
\draw  (0.5,-0.5) node {$\vec F^1_3$};
\draw[-to,shorten >=2pt,thick] (0.5,0)--(1,1);
\draw[-to,shorten >=2pt,thick] (0.5,0)--(0,1);
\draw[-to,shorten >=2pt,thick] (1,1)--(0,1);
\end{tikzpicture}$\quad$
\begin{tikzpicture}
\filldraw[fill=black] (1,1) circle (0.07cm);
\filldraw[fill=black] (0,1) circle (0.07cm);
\filldraw[fill=black] (0.5,0) circle (0.07cm);
\draw (0.3,0) node {\small{$1$}};
\draw  (0.5,-0.5) node {$\vec F^1_4$};
\draw[-to,shorten >=2pt,thick] (0.5,0)--(1,1);
\draw[-to,shorten >=2pt,thick] (0,1)--(0.5,0);
\end{tikzpicture}$\quad$
\begin{tikzpicture}
\filldraw[fill=black] (1,1) circle (0.07cm);
\filldraw[fill=black] (0,1) circle (0.07cm);
\filldraw[fill=black] (0.5,0) circle (0.07cm);
\draw (0.3,0) node {\small{$1$}};
\draw  (0.5,-0.5) node {$\vec F^1_5$};
\draw[-to,shorten >=2pt,thick] (0.5,0)--(1,1);
\draw[-to,shorten >=2pt,thick] (0,1)--(0.5,0);
\draw[-to,shorten >=2pt,thick] (1,1)--(0,1);
\end{tikzpicture}$\quad$
\begin{tikzpicture}
\filldraw[fill=black] (1,1) circle (0.07cm);
\filldraw[fill=black] (0,1) circle (0.07cm);
\filldraw[fill=black] (0.5,0) circle (0.07cm);
\draw (0.3,0) node {\small{$1$}};
\draw  (0.5,-0.5) node {$\vec F^1_6$};
\draw[-to,shorten >=2pt,thick] (0.5,0)--(1,1);
\draw[-to,shorten >=2pt,thick] (0,1)--(0.5,0);
\draw[-to,shorten >=2pt,thick] (0,1)--(1,1);
\end{tikzpicture}$\quad$
\begin{tikzpicture}
\filldraw[fill=black] (1,1) circle (0.07cm);
\filldraw[fill=black] (0,1) circle (0.07cm);
\filldraw[fill=black] (0.5,0) circle (0.07cm);
\draw (0.3,0) node {\small{$1$}};
\draw  (0.5,-0.5) node {$\vec F^1_7$};
\draw[-to,shorten >=2pt,thick] (0.5,0)--(1,1);
\draw[-to,shorten >=2pt,thick] (1,1)--(0,1);
\end{tikzpicture}$\quad$
\begin{tikzpicture}
\filldraw[fill=black] (1,1) circle (0.07cm);
\filldraw[fill=black] (0,1) circle (0.07cm);
\filldraw[fill=black] (0.5,0) circle (0.07cm);
\draw (0.3,0) node {\small{$1$}};
\draw  (0.5,-0.5) node {$\vec F^1_8$};
\draw[-to,shorten >=2pt,thick] (0.5,0)--(1,1);
\draw[-to,shorten >=2pt,thick] (0,1)--(1,1);
\end{tikzpicture}$\quad$
\begin{tikzpicture}
\filldraw[fill=black] (1,1) circle (0.07cm);
\filldraw[fill=black] (0,1) circle (0.07cm);
\filldraw[fill=black] (0.5,0) circle (0.07cm);
\draw (0.3,0) node {\small{$1$}};
\draw  (0.5,-0.5) node {$\vec F^1_9$};
\draw[-to,shorten >=2pt,thick] (1,1)--(0.5,0);
\end{tikzpicture}$\quad$
\begin{tikzpicture}
\filldraw[fill=black] (1,1) circle (0.07cm);
\filldraw[fill=black] (0,1) circle (0.07cm);
\filldraw[fill=black] (0.5,0) circle (0.07cm);
\draw (0.3,0) node {\small{$1$}};
\draw  (0.5,-0.5) node {$\vec F^1_{10}$};
\draw[-to,shorten >=2pt,thick] (1,1)--(0.5,0);
\draw[-to,shorten >=2pt,thick] (0,1)--(0.5,0);
\end{tikzpicture}$\quad$
\begin{tikzpicture}
\filldraw[fill=black] (1,1) circle (0.07cm);
\filldraw[fill=black] (0,1) circle (0.07cm);
\filldraw[fill=black] (0.5,0) circle (0.07cm);
\draw (0.3,0) node {\small{$1$}};
\draw  (0.5,-0.5) node {$\vec F^1_{11}$};
\draw[-to,shorten >=2pt,thick] (1,1)--(0.5,0);
\draw[-to,shorten >=2pt,thick] (0,1)--(0.5,0);
\draw[-to,shorten >=2pt,thick] (1,1)--(0,1);
\end{tikzpicture}$\quad$
\begin{tikzpicture}
\filldraw[fill=black] (1,1) circle (0.07cm);
\filldraw[fill=black] (0,1) circle (0.07cm);
\filldraw[fill=black] (0.5,0) circle (0.07cm);
\draw (0.3,0) node {\small{$1$}};
\draw  (0.5,-0.5) node {$\vec F^1_{12}$};
\draw[-to,shorten >=2pt,thick] (1,1)--(0.5,0);
\draw[-to,shorten >=2pt,thick] (1,1)--(0,1);
\end{tikzpicture}$\quad$
\begin{tikzpicture}
\filldraw[fill=black] (1,1) circle (0.07cm);
\filldraw[fill=black] (0,1) circle (0.07cm);
\filldraw[fill=black] (0.5,0) circle (0.07cm);
\draw (0.3,0) node {\small{$1$}};
\draw  (0.5,-0.5) node {$\vec F^1_{13}$};
\draw[-to,shorten >=2pt,thick] (1,1)--(0.5,0);
\draw[-to,shorten >=2pt,thick] (0,1)--(1,1);
\end{tikzpicture}$\quad$
\begin{tikzpicture}
\filldraw[fill=black] (1,1) circle (0.07cm);
\filldraw[fill=black] (0,1) circle (0.07cm);
\filldraw[fill=black] (0.5,0) circle (0.07cm);
\draw (0.3,0) node {\small{$1$}};
\draw  (0.5,-0.5) node {$\vec F^1_{14}$};
\draw[-to,shorten >=2pt,thick] (1,1)--(0,1);
\end{tikzpicture}
\end{center}
\caption{The $15$ elements of $\mathcal{F}_3^1$.}\label{fig:Fdef}
\end{figure}  
\begin{theorem}\label{th:p3c4}
\begin{eqnarray*}
i(\vec P_3) & \leq & 0.4446 \\
i(\vec C_4) & \leq & 0.1104
\end{eqnarray*}
\end{theorem}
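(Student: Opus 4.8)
The plan is to run the semidefinite (Cauchy--Schwarz) variant of the flag algebra method from Section~\ref{sec:flag} inside $\mathcal{A}^0$, using $\mathcal{O}_5=\mathcal{F}_5^0$ as the ground set and the single type $\sigma=1$ of order~$1$. First I would lift the two target densities to level~$5$: by the double-counting identity~\eqref{eq:dcargu} one has $\vec P_3=\sum_{H\in\mathcal{O}_5}p(\vec P_3,H)\,H$, $\vec C_4=\sum_{H\in\mathcal{O}_5}p(\vec C_4,H)\,H$ and $1=\sum_{H\in\mathcal{O}_5}H$ in $\mathcal{A}^0$. Next, for the vector $\underline{f^1}=(\vec F^1_0,\dots,\vec F^1_{14})^T$ of the $15$ type-$1$ flags of order~$3$ from Figure~\ref{fig:Fdef} and any symmetric positive semidefinite $Q\in\mathbb{R}^{15\times 15}$, the consequence of Theorem~\ref{th:cs} noted after it gives $\left\llbracket\underline{f^1}^TQ\,\underline{f^1}\right\rrbracket_1\geq 0$ in $\mathcal{A}^0$. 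Since $3+3-1=5$, each product $\vec F^1_i\cdot\vec F^1_j=\sum_{F\in\mathcal{F}^1_5}p(\vec F^1_i,\vec F^1_j;F)\,F$ lives at level~$5$, so $\left\llbracket\underline{f^1}^TQ\,\underline{f^1}\right\rrbracket_1=\sum_{H\in\mathcal{O}_5}c_H(Q)\,H$ with every $c_H(Q)$ linear in the entries of $Q$; computing the data $p(\vec P_3,H)$, $p(\vec C_4,H)$ and $c_H(Q)$ amounts to enumerating $\mathcal{O}_5$ and the intermediate set $\mathcal{F}^1_5$ and evaluating the relevant densities, a finite but large bookkeeping task best done by computer.

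With these expansions in hand, the upper bound for $\vec P_3$ reduces to a semidefinite program: minimise the scalar $c$ over symmetric $Q\succeq 0$ subject to
\begin{equation*}
c-p(\vec P_3,H)-c_H(Q)\ \geq\ 0\qquad\text{for all }H\in\mathcal{O}_5,
\end{equation*}
because these $582$ inequalities say precisely that $c\cdot 1-\vec P_3-\left\llbracket\underline{f^1}^TQ\,\underline{f^1}\right\rrbracket_1$ has all coefficients nonnegative over the basis $\mathcal{O}_5$, hence is $\geq 0$ in $\mathcal{A}^0$, which as in the Examples of Section~\ref{sec:flag} forces $i(\vec P_3)\leq c$. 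Solving this SDP numerically should return an optimum slightly below $0.4446$; the value $\tfrac25$ conjectured by Thomass\'e is presumably not attainable with flags on only three vertices, which is why the resulting bound is not tight. The same program with $p(\vec P_3,H)$ replaced by $p(\vec C_4,H)$ gives $i(\vec C_4)\leq 0.1104$. (If the type-$1$ block alone should turn out to be too weak, one may also append the trivial type, but the $15$ one-flags on $3$ vertices appear to suffice.)

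The main obstacle is turning the floating-point solver output into a rigorous proof. I would take the numerical matrix $Q$, round its entries to rationals with a small common denominator, certify that the rounded $Q$ is still positive semidefinite (for instance via an explicit $LDL^T$ decomposition), and check that the $582$ linear inequalities still hold after a slight increase of $c$; the slack absorbed by this rounding is exactly the gap between the true SDP optimum and the reported $0.4446$ (respectively $0.1104$). A secondary, more mechanical hurdle is generating the $582$ isomorphism classes of $\mathcal{O}_5$ and the set $\mathcal{F}^1_5$ correctly and computing all the densities $p(\vec P_3,H)$, $p(\vec C_4,H)$, $p(\vec F^1_i,\vec F^1_j;F)$ without error; I would guard this part with consistency checks such as $\sum_{G\in\mathcal{O}_3}p(G,H)=1$ for every $H\in\mathcal{O}_5$, $\sum_{i=0}^{14}p(\vec F^1_i,F)=1$ for every $F\in\mathcal{F}^1_5$, and the recovery of the weaker bound $i(\vec P_3)\leq\tfrac47$ from Section~\ref{sec:flag} as a suboptimal feasible point of the program.
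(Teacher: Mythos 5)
Your proposal follows exactly the approach the paper takes: expand everything at level $\mathcal{O}_5$, square the $15$ type-$1$ flags of order $3$ against a positive semidefinite matrix, set up the resulting linear feasibility/SDP problem, and certify a rounded rational solution as positive semidefinite. The paper's proof simply records the explicit rational matrices $A$ and $B$ (and checks their smallest eigenvalues are positive) that this procedure produces, together with the resulting coefficient inequalities, so you have reconstructed the argument correctly.
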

\begin{proof}
We define the symmetric matrices $A,B\in \mathbb{R}^{15\times 15}$ by
\footnotesize
\begin{equation*}
A:=\frac{6}{10^{4}}
\left(
\begin{array}{*{8}{r}}
       739 &     1153 &  -    62 &  -   420 &  -   120 &  -    31 &  -   321 &   \\
      1153 &     7013 &     2254 &     2355 &  -    18 &     4192 &  -   772 &   \\
   -    62 &     2254 &     3147 &     2726 &     1998 &      925 &  -   142 &   \\
   -   420 &     2355 &     2726 &     6798 &  -   488 &      545 &      804 &   \\
   -   120 &  -    18 &     1998 &  -   488 &     4573 &  -  2496 &  -   529 &   \\
   -    31 &     4192 &      925 &      545 &  -  2496 &    12610 &      777 &   \\
   -   321 &  -   772 &  -   142 &      804 &  -   529 &      777 &     1578 & \ldots  \\
   -   131 &       51 &      420 &     2622 &  -  1719 &  -  1327 &  -   495 &   \\
   -    91 &     1992 &      865 &     3770 &  -  2159 &     3317 &      564 &   \\
      1153 &     1953 &  -   636 &  -  3913 &  -    18 &     4192 &  -   772 &   \\
   -    62 &  -   636 &  -   469 &  -  2970 &     1998 &      925 &  -   142 &   \\
   -   420 &  -  3913 &  -  2970 &  -  5390 &  -   488 &      545 &      804 &   \\
   -    91 &  -   798 &  -  1853 &  -  3172 &  -  2159 &     3317 &      564 &   \\
   -   131 &  -  2009 &  -  2958 &  -  2781 &  -  1719 &  -  1327 &  -   495 &   \\
       663 &     1276 &  -  2042 &  -   527 &  -  4884 &     3548 &  -   491 &  
 \end{array}
\right. 
\end{equation*}
\begin{equation*}
\left.
\begin{array}{*{8}{r}}
 -   131 &  -    91 &     1153 &  -    62 &  -   420 &  -    91 &  -   131 &      663 \\
      51 &     1992 &     1953 &  -   636 &  -  3913 &  -   798 &  -  2009 &     1276 \\
     420 &      865 &  -   636 &  -   469 &  -  2970 &  -  1853 &  -  2958 &  -  2042 \\
    2622 &     3770 &  -  3913 &  -  2970 &  -  5390 &  -  3172 &  -  2781 &  -   527 \\
 -  1719 &  -  2159 &  -    18 &     1998 &  -   488 &  -  2159 &  -  1719 &  -  4884 \\
 -  1327 &     3317 &     4192 &      925 &      545 &     3317 &  -  1327 &     3548 \\
 -   495 &      564 &  -   772 &  -   142 &      804 &      564 &  -   495 &  -   491 \\
    4221 &     1590 &  -  2009 &  -  2958 &  -  2781 &  -  1067 &  -   992 &     2025 \\
    1590 &     4666 &  -   798 &  -  1853 &  -  3172 &  -  1329 &  -  1067 &     2603 \\
 -  2009 &  -   798 &     7013 &     2254 &     2355 &     1992 &       51 &     1276 \\
 -  2958 &  -  1853 &     2254 &     3147 &     2726 &      865 &      420 &  -  2042 \\
 -  2781 &  -  3172 &     2355 &     2726 &     6798 &     3770 &     2622 &  -   527 \\
 -  1067 &  -  1329 &     1992 &      865 &     3770 &     4666 &     1590 &     2603 \\
 -   992 &  -  1067 &       51 &      420 &     2622 &     1590 &     4221 &     2025 \\
    2025 &     2603 &     1276 &  -  2042 &  -   527 &     2603 &     2025 &     8134 
\end{array}\right)  
\end{equation*}
\begin{equation*}
B:=\frac{6}{10^{4}}
\left(
\begin{array}{*{8}{r}}
       183 &      408 &  -   148 &  -    26 &  -   588 &  -     7 &      386 &    \\
       408 &     1549 &  -    43 &      485 &  -  1637 &      397 &      932 &    \\      
   -   148 &  -    43 &      620 &      455 &     1078 &       22 &  -   446 &    \\      
   -    26 &      485 &      455 &     1093 &      334 &      201 &      303 &    \\      
   -   588 &  -  1637 &     1078 &      334 &     5261 &  -   298 &  -  1060 &    \\      
   -     7 &      397 &       22 &      201 &  -   298 &     1084 &      178 &    \\      
       386 &      932 &  -   446 &      303 &  -  1060 &      178 &     1654 &  \ldots  \\
   -     1 &      684 &      561 &     1369 &  -  1931 &  -   320 &  -   744 &    \\      
        83 &      754 &       30 &      942 &  -   265 &      610 &      698 &    \\      
       408 &      999 &  -   569 &  -   356 &  -  1637 &      397 &      932 &    \\      
   -   148 &  -   569 &  -    33 &  -   390 &     1078 &       22 &  -   446 &    \\      
   -    26 &  -   356 &  -   390 &  -   672 &      334 &      201 &      303 &    \\      
        83 &      541 &  -   118 &  -   238 &  -   265 &      610 &      698 &    \\      
   -     1 &  -  1036 &  -  1180 &  -  2401 &  -  1931 &  -   320 &  -   744 &    \\      
       178 &      302 &  -   492 &  -   412 &  -  2498 &  -   205 &  -    93 & 
\end{array}
\right. 
\end{equation*}
\begin{equation*}
\left.
\begin{array}{*{8}{r}}
 -     1 &       83 &      408 &  -   148 &  -    26 &       83 &  -     1 &      178\\
     684 &      754 &      999 &  -   569 &  -   356 &      541 &  -  1036 &      302\\
     561 &       30 &  -   569 &  -    33 &  -   390 &  -   118 &  -  1180 &  -   492\\
    1369 &      942 &  -   356 &  -   390 &  -   672 &  -   238 &  -  2401 &  -   412\\
 -  1931 &  -   265 &  -  1637 &     1078 &      334 &  -   265 &  -  1931 &  -  2498\\
 -   320 &      610 &      397 &       22 &      201 &      610 &  -   320 &  -   205\\
 -   744 &      698 &      932 &  -   446 &      303 &      698 &  -   744 &  -    93\\
    5941 &      589 &  -  1036 &  -  1180 &  -  2401 &  -  2154 &  -  2190 &     1614\\
     589 &     1847 &      541 &  -   118 &  -   238 &  -    56 &  -  2154 &  -   468\\
 -  1036 &      541 &     1549 &  -    43 &      485 &      754 &      684 &      302\\
 -  1180 &  -   118 &  -    43 &      620 &      455 &       30 &      561 &  -   492\\
 -  2401 &  -   238 &      485 &      455 &     1093 &      942 &     1369 &  -   412\\
 -  2154 &  -    56 &      754 &       30 &      942 &     1847 &      589 &  -   468\\
 -  2190 &  -  2154 &      684 &      561 &     1369 &      589 &     5941 &     1614\\
    1614 &  -   468 &      302 &  -   492 &  -   412 &  -   468 &     1614 &     1638
\end{array}\right)
\end{equation*}
\normalsize
The smallest eigenvalue of $A$ is $\approx 0.00004$ and the smallest eigenvalue of B is $\approx 0.00005$. Thus, $A$ and $B$ are symmetric positive semidefinite. Now, the proof is completed by the following inequalities. 
\begin{eqnarray*}
0.4446-\;\parbox[c]{1cm}{\begin{tikzpicture}
\filldraw[fill=black] (1,1) circle (0.07cm);
\filldraw[fill=black] (0,1) circle (0.07cm);
\filldraw[fill=black] (0.5,0) circle (0.07cm);
\draw[-to,shorten >=2pt,thick] (0.5,0)--(1,1);
\draw[-to,shorten >=2pt,thick] (0,1)--(0.5,0);
\end{tikzpicture}}
& \geq & \left\llbracket \left(\underline{f^1}\right)^TA\underline{f^1}\right\rrbracket_1\geq 0 \Rightarrow i(\vec P_3)\leq 0.4446\\
0.1104-
\;\parbox[c]{1cm}{\begin{tikzpicture}
\filldraw[fill=black] (1,1) circle (0.07cm);
\filldraw[fill=black] (1,2) circle (0.07cm);
\filldraw[fill=black] (2,1) circle (0.07cm);
\filldraw[fill=black] (2,2) circle (0.07cm);
\draw[-to,shorten >=2pt,thick] (1,2)--(1,1);
\draw[-to,shorten >=2pt,thick] (1,1)--(2,1);
\draw[-to,shorten >=2pt,thick] (2,1)--(2,2);
\draw[-to,shorten >=2pt,thick] (2,2)--(1,2);
\end{tikzpicture}}
& \geq &\left\llbracket \left(\underline{f^1}\right)^TB\underline{f^1}\right\rrbracket_1\geq 0 \Rightarrow i(\vec C_4)\leq 0.1104
\end{eqnarray*}
\end{proof}
\begin{theorem}\label{th:k12}
\begin{equation*}
6-4\sqrt{2} \leq i(\vec K_{1,2})=i(\vec K_{2,1}) \leq 0.4644
\end{equation*}
\end{theorem}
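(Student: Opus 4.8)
I would split the statement into its three parts -- the equality $i(\vec K_{1,2})=i(\vec K_{2,1})$, the lower bound $i(\vec K_{1,2})\geq 6-4\sqrt2$, and the upper bound $i(\vec K_{1,2})\leq 0.4644$ -- and treat them in that order. The equality is pure symmetry: reversing every arc of an orgraph is an involution on the set of $n$-vertex orgraphs that preserves the vertex number and interchanges induced copies of $\vec K_{1,2}$ (one vertex dominating two non-adjacent vertices) with induced copies of $\vec K_{2,1}$. Hence $max(\vec K_{1,2};n)=max(\vec K_{2,1};n)$ for every $n$, so $i(\vec K_{1,2})=i(\vec K_{2,1})$, and it suffices to bound $i(\vec K_{1,2})$.

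For the lower bound I would exhibit an explicit limit orgraph, in the spirit of the $(\vec C_4)^{\circ n}$ computation in the Observation. Take the recursive construction whose $n$-vertex version splits its vertices into a source block $S$ of relative size $t$ and an independent sink block $I$ of relative size $1-t$, places every arc from $S$ to $I$ and no arc inside $I$, and builds $S$ as a scaled copy of the same construction. A triple induces $\vec K_{1,2}$ exactly when it lies entirely in $S$ and induces $\vec K_{1,2}$ there, or when one of its vertices lies in $S$ and the other two lie in $I$ (no two vertices of $I$ are adjacent and every vertex of $S$ dominates all of $I$); a triple with two vertices in $S$ and one in $I$ induces a $\vec K_{2,1}$ or a transitive triangle, never $\vec K_{1,2}$. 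This gives, for the limiting $\vec K_{1,2}$-density $x(t)$, the relation $x(t)\binom{n}{3}\approx tn\cdot\binom{(1-t)n}{2}+x(t)\binom{tn}{3}$, i.e. $x(t)=\frac{3t(1-t)}{1+t+t^{2}}$. Already $t=\frac12$ yields $x=\frac37>6-4\sqrt2$, and optimising over $t$ (the maximum is $x=2\sqrt3-3$, attained at $t=\frac{\sqrt3-1}{2}$) establishes the bound with room to spare; this near-coincidence with the upper bound below also suggests the true value is $2\sqrt3-3$.

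For the upper bound I would rerun the flag-algebra semidefinite argument used in the proof of Theorem~\ref{th:p3c4}, now with $\vec K_{1,2}$ as the target flag, working in $\mathcal{A}^{1}$ with the vector $\underline{f^1}$ of the $15$ elements of $\mathcal{F}_3^1$ from Figure~\ref{fig:Fdef}. One seeks a symmetric positive semidefinite matrix $C\in\mathbb{R}^{15\times15}$ such that, after expanding
\[
0.4644-\big(\text{density of }\vec K_{1,2}\big)-\left\llbracket\left(\underline{f^1}\right)^{T}C\,\underline{f^1}\right\rrbracket_1
\]
as a linear combination of the $582$ densities of orgraphs in $\mathcal{O}_5$ by means of the double-counting identity \eqref{eq:dcargu} (with $\tilde\ell=5$), all $582$ coefficients are nonnegative. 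Then the left inequality in
\[
0.4644-\big(\text{density of }\vec K_{1,2}\big)\;\geq\;\left\llbracket\left(\underline{f^1}\right)^{T}C\,\underline{f^1}\right\rrbracket_1\;\geq\;0
\]
holds because orgraph densities are nonnegative, the right one by Theorem~\ref{th:cs}, and $i(\vec K_{1,2})\leq 0.4644$ follows. Finding $C$ is a semidefinite program (minimise $c$ subject to $C\succeq 0$ and to all $\mathcal{O}_5$-coefficients of $c-(\text{density of }\vec K_{1,2})-\llbracket(\underline{f^1})^{T}C\underline{f^1}\rrbracket_1$ being $\geq0$); I would solve it numerically, round the optimum to an explicit $C$ with scaled-integer entries whose least eigenvalue is still positive, and verify the resulting linear inequalities over $\mathcal{O}_5$ exactly, just as the matrices $A,B$ are presented in Theorem~\ref{th:p3c4}.

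The symmetry step and the one-variable optimisation for the lower bound are routine. I expect the real difficulty to lie entirely in the upper bound -- both in the size of the semidefinite program ($582$ five-vertex orgraphs, a $15\times15$ matrix variable) and, above all, in turning its numerical solution into a rigorous certificate: producing an explicit, provably positive semidefinite $C$ for which the coefficient conditions over $\mathcal{O}_5$ hold exactly. Should a single type-$1$ block prove insufficient, the natural remedy is to add further positive semidefinite blocks for the two-vertex types (the arc and the non-edge), though by analogy with Theorem~\ref{th:p3c4} the type $1$ alone is expected to suffice.
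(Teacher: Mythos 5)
Your symmetry argument for $i(\vec K_{1,2})=i(\vec K_{2,1})$ is exactly the paper's, and your description of the upper-bound argument is also the same method the paper uses (a type-$1$ Cauchy--Schwarz semidefinite program over $\mathcal{O}_5$ with the vector $\underline{f^1}$); note though that you have only sketched the SDP and not exhibited a certificate, whereas the proof requires an explicit positive semidefinite $C$ — the paper supplies one, so that part of your write-up is a correct blueprint but not yet a proof. The genuinely different (and interesting) part is your lower-bound construction. The paper uses a three-block recursion: a source $S_1$ of relative size $s$ with all arcs to $S_2\cup S_3$, no arcs between $S_2$ and $S_3$, and \emph{each} of $S_1,S_2,S_3$ carrying a recursive copy; this yields $d(s)=\frac{4s(1-s)}{(1+s)(1+3s)}$, maximised at $s=\frac{2\sqrt2-1}{7}$ to give exactly $6-4\sqrt2\approx0.3431$. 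Your two-block recursion (source $S$ of size $t$ carrying the recursion, arcs from $S$ to an edgeless block $I$ of size $1-t$, nothing inside $I$) gives $x(t)=\frac{3t(1-t)}{1+t+t^2}$, which is correct: a triple with one vertex in $S$ and two in $I$ always induces $\vec K_{1,2}$, a triple with two in $S$ and one in $I$ never does (it is $\vec K_{2,1}$ or a transitive triangle), and the all-$S$ and all-$I$ cases contribute $t^3 x$ and $0$. Optimising gives $t=\frac{\sqrt3-1}{2}$ and $x=2\sqrt3-3\approx0.4641$, which is strictly larger than $6-4\sqrt2$ and sits just below the SDP upper bound $0.4644$. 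So your construction not only proves the stated inequality $i(\vec K_{1,2})\ge 6-4\sqrt2$ with room to spare, it actually improves the paper's lower bound and nearly closes the gap; this is a genuine strengthening and worth recording explicitly as $i(\vec K_{1,2})\ge 2\sqrt3-3$.
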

\begin{proof}
At first, we observe that $i(\vec K_{1,2})=i(\vec K_{2,1})$, because each extremal orgraph of $i(\vec K_{1,2})$ can be transformed to an extremal orgraph of $i(\vec K_{2,1})$ by changing the directions of each edge. Thus, in the following we consider only $\vec K_{1,2}$.\newline
The lower bound we will get from a recursive construction, which is a generalisation of the lexicographic product constructions, we had found for the other orgraphs before. Let $s:=\frac{2\sqrt{2}-1}{7}$ and $\vec G$ be an orgraph of infinte order. We seperate $V_{\vec G}$ into three parts $S_1$, $S_2$ and $S_3$, such that $S_1$ are $s$ parts of all vertices of $\vec G$, $S_2$ are $\frac{1-s}{2}$ parts and $S_3$ the remaining $\frac{1-s}{2}$ parts. Now $\vec G$ has an edge from every vertex in $S_1$ to every vertex in $S_2\cup S_3$. Furthermore there is no edge between a vertex from $S_2$ and a vertex from $S_3$. Finally, for each $i\in \left[3\right]$ the vertices of $S_i$ contain a copy of $\vec G$. See figure \ref{fig:k12graph} for an illustration of this definition.
\begin{figure}\label{fig:k12graph}
\begin{center}
\begin{tikzpicture}
\draw[-to,shorten >=31pt,very thick] (5,2)--(8,5);
\draw[-to,shorten >=31pt,very thick] (5,2)--(2,5);
\filldraw[fill=white] (5,2) circle (1cm);
\draw (5,2) circle (1cm);
\draw (5,2) node {$\vec G$};
\draw (5,0.5) node {$S_1$: $s$-parts};
\draw (2,5) circle (1.1cm);
\draw (2,5) node {$\vec G$};
\draw (2,6.6) node {$S_2$: $\frac{1-s}{2}$-parts};
\draw (8,5) circle (1.1cm);
\draw (8,5) node {$\vec G$};
\draw (8,6.6) node {$S_3$: $\frac{1-s}{2}$-parts};
\draw (0,3.5) node {$\vec G$:};
\end{tikzpicture}
\end{center}
\caption{The recursive construction of $\vec G$}
\end{figure}
Now we can get the non-edge-density $\bar{\rho}$ of $\vec G$ by
\begin{equation*}
\bar{\rho}=\underbrace{(1-s)\frac{1-s}{2}}_{\text{between}\; S_2\; \text{and} \;S_3 }+\underbrace{\left(s^2+2\left(\frac{1-s}{2}\right)^2\right)\bar{\rho}}_{\text{in each part}}
\end{equation*}
Thus, by rearranging we have
\begin{equation*}
\bar{\rho}=\frac{1-s}{3s+1}.
\end{equation*}
Now we are able to compute the density $d$ of $\vec K_{1,2}$'s in $\vec G$. We get
\begin{equation*}
d=6s\left(\frac{1-s}{2}\right)^2+6s\left(\frac{1-s}{2}\right)^2\bar{\rho}+\left(s^3+2\left(\frac{1-s}{2}\right)^3\right)d,
\end{equation*}
where the first summand is for the $\vec K_{1,2}$'s, where each two vertices are from different parts. The second summand is for the $\vec K_{1,2}$'s, where one vertex is from $S_1$ and the other two vertices are both from $S_2$ or $S_3$. Finally, the last summand is for the $\vec K_{1,2}$'s in each of the three parts. Hence, again by rearranging we get
\begin{equation*}
d=\frac{4(1-s)s}{(1+s)(3s+1)}=6-4\sqrt{2} \Rightarrow i(\vec K_{1,2})\geq 6-4\sqrt{2}.
\end{equation*}
The upper bound we obtain in the same way like in theorem \ref{th:p3c4}. We define $C\in\mathbb{R}^{15\times 15}$ by
\footnotesize
\begin{equation*}
C:=\frac{3}{10^{5}}
\left(
\begin{array}{*{8}{r}}
        15476 &   18421 &   4912 & - 8427 & -  4151 & - 17228 &  -19517 &    \\
        18421 &  124190 &  15060 & -26258 &   86563 &   12365 &  -30370 &    \\
         4912 &   15060 &  14103 & -24269 &   29162 & -  8930 &  -14418 &    \\
       - 8427 & - 26258 & -24269 &  41867 & - 50792 &   14927 &   24879 &    \\
       - 4151 &   86563 &  29162 & -50792 &  234128 &   50298 &  - 1715 &    \\
       -17228 &   12365 & - 8930 &  14927 &   50298 &  109464 &   19869 &    \\
       -19517 & - 30370 & -14418 &  24879 & -  1715 &   19869 &   58166 &  \ldots  \\
       - 2703 &   32824 & -28619 &  48783 & -  9280 &   65336 &   11943 &    \\
       - 1825 & - 23505 & -30259 &  52000 & - 79330 &   12541 &    5415 &    \\
         6567 &   74168 &  22951 & -39939 &  142094 &   35394 &  -10797 &    \\
       - 8635 &   20078 &  23975 & -41342 &   97496 &   12362 &   10837 &    \\
       -39099 & - 16836 &   3666 & - 6625 &   90402 &   62263 &   45590 &    \\
       -18355 & - 34480 & -15446 &  26604 & - 32752 &   16811 &   23634 &    \\
       -14216 &   27335 & -20578 &  34825 &   47003 &   88561 &   30468 &    \\
         5928 &   29429 & -36650 &  62808 & - 59524 &   46894 &     561 &
 \end{array}
\right. 
\end{equation*}
\begin{equation*}
\left.
\begin{array}{*{8}{r}}
  -  2703 & -  1825 &    6567 & - 8635 & - 39099 & -18355 & - 14216 &    5928 \\
    32824 & - 23505 &   74168 &  20078 & - 16836 & -34480 &   27335 &   29429 \\
  - 28619 & - 30259 &   22951 &  23975 &    3666 & -15446 & - 20578 & - 36650 \\
    48783 &   52000 & - 39939 & -41342 & -  6625 &  26604 &   34825 &   62808 \\
  -  9280 & - 79330 &  142094 &  97496 &   90402 & -32752 &   47003 & - 59524 \\
    65336 &   12541 &   35394 &  12362 &   62263 &  16811 &   88561 &   46894 \\
    11943 &    5415 & - 10797 &  10837 &   45590 &  23634 &   30468 &     561 \\
   191285 &   71426 &    3893 & -52488 & - 10271 &  20412 &  113660 &  167521 \\
    71426 &   79987 & - 56787 & -72396 & - 37361 &  28315 &   38299 &  100215 \\
     3893 & - 56787 &  155775 &  64490 &   39105 & -34596 &   34033 & - 25666 \\
  - 52488 & - 72396 &   64490 &  80839 &   70076 & -17545 & -  9052 & - 89649 \\
  - 10271 & - 37361 &   39105 &  70076 &  155515 &  24670 &   44067 & - 59957 \\
    20412 &   28315 & - 34596 & -17545 &   24670 &  32563 &   20238 &   23056 \\
   113660 &   38299 &   34033 & - 9052 &   44067 &  20238 &  158708 &  103921 \\
   167521 &  100215 & - 25666 & -89649 & - 59957 &  23056 &  103921 &  232223
\end{array}\right)
\end{equation*}
 \normalsize
Again, the smallest eigenvalue of $C$ is $\approx 0.000007$. Hence, $C$ is symmetric positive semidefinite. Now the proof of the upper bound is completed by the following inequality.
\begin{equation*}
0.4644-\;\parbox[c]{1cm}{\begin{tikzpicture}
\filldraw[fill=black] (1,1) circle (0.07cm);
\filldraw[fill=black] (0,1) circle (0.07cm);
\filldraw[fill=black] (0.5,0) circle (0.07cm);
\draw[-to,shorten >=2pt,thick] (0.5,0)--(1,1);
\draw[-to,shorten >=2pt,thick] (0.5,0)--(0,1);
\end{tikzpicture}}
\; \geq \; \left\llbracket \left(\underline{f^1}\right)^TC\underline{f^1}\right\rrbracket_1\geq 0 \Rightarrow i(\vec K_{1,2})\leq 0.4644
\end{equation*}
\end{proof}
\subsection{Some remarks}
Most parts of the proofs of the upper bounds in the theorems \ref{th:p3c4} and \ref{th:k12} were done by a computer. At first we decided to work in $\mathcal{O}_5$. Thus, if we take products of two $1$-flags on $3$ vertices, then our calculus works in $\mathcal{O}_5$. After that we used a computer program to calculate the equation which the semidefinte matrix $A$ have to fulfill such that we can prove the associated upper bound. Finally, the determination of the matrices was simply done by a sufficiently close rational approximation to the outcome of a numerical semidefinite-program-solver.  The decision to took the type $1$ came from computer experiments. Suprisingly, we don't get better results, if we take larger types, because in another paper \cite{sperfeld} we got better results with types of higher order. For example we tried the calculation with $9$ types on $3$ vertices and flags of appropriate order such that our calculation works in $\mathcal{O}_5$, but we didn't get better upper bounds in this case. 

\end{document}